\newcommand{\mmp}{\mathbb{P}}
\newcommand{\od}{\overset{d}{=}}
\newcommand{\dod}{\overset{d}{\to}}
\newcommand{\tp}{\overset{P}{\to}}
\newcommand{\me}{\mathbb{E}}
\newcommand{\mr}{\mathbb{R}}
\newcommand{\mn}{\mathbb{N}}
\newcommand{\mno}{\mathbb{N}_0}
\newtheorem{thm}{Theorem}[section]
\newtheorem{lemma}[thm]{Lemma}
\newtheorem{assertion}[thm]{Proposition}
\theoremstyle{definition}
\theoremstyle{remark}
\begin{document}
\title{Weak convergence of the number of zero increments in the random walk with barrier}
\date{}
\author{Alexander Marynych\footnote{Faculty of Cybernetics, Taras Shevchenko National University of Kyiv, Ukraine. E-mail:
marynych@unicyb.kiev.ua} \and Glib Verovkin\footnote{Faculty of Mechanics and Mathematics, Taras Shevchenko National University of Kyiv, Ukraine. E-mail: glebverov@gmail.com}} %

\maketitle
\begin{abstract}
\noindent
We continue the line of research of random walks with barrier initiated by Iksanov and M{\"o}hle (2008). Assuming that the tail of the step of the underlying random walk has a power-like behavior at infinity with exponent $-\alpha$, $\alpha\in(0,1)$, we prove that the number $V_n$ of zero increments in the random walk with barrier, properly centered and normalized, converges weakly to the standard normal law. This refines previously known weak law of large numbers for $V_n$ proved in Iksanov and Negadailov (2008).
\end{abstract}
\noindent {\em Keywords}:  random walk with barrier, recursion with random indicies, renewal process, undershot


\section{Introduction}

Let $(\xi_k)_{k\in\mn}$ be independent copies of a random variable
$\xi$ with distribution $p_k=\mmp\{\xi=k\}$, $k\in\mn$. The random walk with barrier
$n\in\mn$ is a sequence $(R_k^{(n)})_{k\in\mn_0}$ (where $\mn_0:=\mn\cup \{0\}$) defined as
follows:
$$R_0^{(n)}:=0 \ \ \text{and} \ \
R_k^{(n)}:=R_{k-1}^{(n)}+\xi_k 1_{\{R_{k-1}^{(n)}+\xi_k<n\}}, \ \
k\in\mn.$$ Plainly, $(R_k^{(n)})_{k \in \mn_0}$ is a
non-decreasing Markov chain which cannot reach the state $n$. In
what follows we always assume that $p_1>0$ which implies that the random walk with barrier $n$ will
eventually get absorbed in the state $n-1$.

The equalities
$$M_n:= \# \{k \in \mn : R_{k-1}^{(n)} \neq R_{k}^{(n)} \} = \sum_{l=0}^{\infty}1_{\{R_{l}^{(n)} + \xi_{l+1} < n\}};$$
$$T_n:=\inf\{k\in\mn_0: R_k^{(n)}=n-1\}=\sum_{l\geq 0} 1_{\{R_l^{(n)}<n-1\}};$$
$$V_n: = T_n - M_n = \#\{i\leq T_n: R_{i-1}^{(n)}=
R_i^{(n)}\}=\sum_{l=0}^{T_n-1}1_{\{R_l^{(n)}+\xi_{l+1}\geq n\}}$$
define, respectively, the number of jumps, the absorption time and
the number of zero increments before the absorption in the random
walk with barrier $n$.

There is a large number of real life situations where the random walk
with barrier appears naturally. Let PTC be a transport company, offering a tour to the 
national park. The PTC uses buses with total amount of seats $n$. Various groups of 
people book seats in order to visit the park. If the size of the group is less than remaining 
number of vacant seats, the request satisfied, otherwise it is turned down. The quantities 
of interest are the total number of groups applied $T_{n+1}$,
the number of accepted groups $M_{n+1}$ and the number of rejections $V_{n+1}$.

Another example is the work of a server. Imagine that a client has bought an internet-package $n$ Mb in size.
Consider the downloading of files with the size being a multiple of 1 Mb: the server receives requests on 
download, if the size of file is lower than remaining
size, then it starts downloading it, else blocks the request. Similarly to the example above, the
quantities of interest in this case are the the total number of requests $T_{n+1}$,
the number of downloaded files $M_{n+1}$ and the number of blocked requests $V_{n+1}$.

In \cite{IksMoe2} (see also \cite{Haas} for a particular case) it
was shown that, if the law of $\xi$ belongs to the
domain of attraction of a stable law, $M_n$, properly normalized
and centered, weakly converges. Furthermore, the set of limiting
laws is comprised of stable laws and the law of exponential
subordinator. In \cite{Negad} it was checked that the same group
of results hold on replacing $M_n$ by $T_n$. Finally, in
\cite{IksNeg2} it was proved that: (a) if $\me \xi<\infty$ then
$V_n$ weakly converges (without normalization); (b) if the law of
$\xi$ belongs to the domain of attraction of an $\alpha$-stable
law with $\alpha\in (0,1]$, equivalently  if
\begin{equation}\label{reg}
\mmp\{\xi\geq n\}\sim n^{-\alpha}\ell(n),\;\;n\to\infty,
\end{equation}
for some $\ell$ slowly varying at infinity, then $V_n/\me V_n \tp 1$ as $n\to\infty$.

To complete the picture, in this paper we give results about the
weak convergence of $V_n$. The treatment of $V_n$ calls for more delicate argument than that for
$M_n$ and/or $T_n$. Crudely speaking, while the asymptotics of
$M_n$ and $T_n$ is based on the "first order"\, arguments, the
asymptotics of $V_n$ needs the "second order"\, reasoning. As a
result, the approach exploited in \cite{IksMoe2, IksNeg2} does not
help in the present situation. Moreover, regular variation \eqref{reg} alone
seems not to be enough to ensure the weak convergence of properly scaled
and normalized $V_n$ and one has to impose more restrictive "second-order" condition
on the tail $\mmp\{\xi\geq n\}$. In this work we prove a central limit theorem-type result for $V_n$ assuming

\begin{equation}\label{main_assumption}
\mmp\{\xi\geq n\}=cn^{-\alpha} +O(n^{-(\alpha+\varepsilon)}),\;\;n\to\infty,
\end{equation}
for some $c>0$, $\alpha\in(0,1)$ and $\varepsilon>0$.

In what follows we reserve notation $\eta$ for a random
variable with the beta $(1-\alpha, \alpha)$ law, $\alpha\in
(0,1)$, i.e.,
\begin{equation}\label{beta_density}
\mmp\{\eta\in {\rm d}x\}= \frac{\sin\pi\alpha}{\pi}x^{-\alpha}(1-x)^{\alpha-1}1_{(0,1)}(x){\rm d}x;
\end{equation}
$$\mu_\alpha:=\me |\log \eta|=\psi(1)-\psi(1-\alpha)$$ and
$$\sigma^2_\alpha:={\rm Var}\,(\log
\eta)=\psi^\prime(1-\alpha)-\psi^\prime(1),$$ where
$\psi(x)=\Gamma^\prime(x)/\Gamma(x)$ is the logarithmic derivative
of the gamma function.

The main result of this paper is given by the next theorem
\begin{thm}\label{main_thm}
Assume that \eqref{main_assumption} holds with $\alpha\in(0,1)$, $\varepsilon>0$ and $c>0$. If $\alpha\in(0,1/2]$ assume additionaly
\begin{equation}\label{doney_assumption}
\sup_{n\geq 1}\frac{np_n}{\mmp\{\xi>n\}}<\infty.
\end{equation}
 Then
\begin{equation*}
\frac{V_n-\mu_{\alpha}^{-1}\log n}{\sqrt{\sigma_{\alpha}^2\mu_{\alpha}^{-3}\log n}}\dod \mathcal{N}(0,1),\;\;n\to\infty,
\end{equation*}
where $\mathcal{N}(0,1)$ is a random variable with the standard normal law. Moreover, there is a convergence of the first absolute moments.
\end{thm}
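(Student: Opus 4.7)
The plan is to approximate $V_n$ by a classical renewal counting process driven by $|\log\eta|$ with $\eta\sim$ beta$(1-\alpha,\alpha)$, and then to invoke the renewal central limit theorem.

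First I would establish a recursive distributional decomposition of $V_n$. Conditioning on the first accepted jump of the walk with barrier and the number of rejections preceding it yields
$$V_n\od G_n+V'_{n-\tilde\xi_n},$$
where $G_n$ is geometric with success parameter $\mmp\{\xi<n\}$, $\tilde\xi_n$ has the law of $\xi$ conditioned on $\{\xi<n\}$, $V'$ is an independent copy of the family $(V_k)$, and the three variables are mutually independent. Iterating this identity produces the representation
$$V_n=\sum_{i=0}^{M_n-1}G^{(i)},$$
in which $G^{(i)}$, conditional on the $i$-th accepted position $L_i^{(n)}$ of the walk with barrier, is geometric with parameter $\mmp\{\xi<n-L_i^{(n)}\}$.

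The structure of the normalization in the theorem points to the renewal CLT as the natural tool. Specifically, let $(N_t)_{t\geq 0}$ be the renewal counting process with i.i.d.\ step distribution $|\log\eta|$; then $\me|\log\eta|=\mu_\alpha$ and ${\rm Var}(\log\eta)=\sigma_\alpha^2$, and the classical renewal CLT gives
$$\frac{N_t-t/\mu_\alpha}{\sqrt{\sigma_\alpha^2 t/\mu_\alpha^3}}\dod\mathcal{N}(0,1),\qquad t\to\infty,$$
which at $t=\log n$ matches the target statement exactly. It therefore suffices to establish the quantitative approximation $V_n = N_{\log n} + o_{\mmp}(\sqrt{\log n})$, together with its $L^1$-version to secure the convergence of first absolute moments. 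The bridge between the walk with barrier and $|\log\eta|$ is a discrete arcsine-type result: for the unrestricted random walk $(S_k)$ with step $\xi$ satisfying \eqref{reg}, the rescaled undershoot $S_{\nu_n-1}/n$ (with $\nu_n$ the first-passage time strictly above $n$) converges in distribution to $\eta\sim$ beta$(1-\alpha,\alpha)$. Iterating along the trajectory of the walk with barrier suggests a multiplicative structure for the remaining gaps $D_i^{(n)}:=n-L_i^{(n)}$, which, on the logarithmic scale, feeds into the desired renewal-process matching.

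The main obstacle is carrying out the coupling at the correct resolution. Since the fluctuations of $V_n$ are only of order $\sqrt{\log n}$, whereas the total number of accepted jumps $M_n$ is of order $n^\alpha$ --- so that $V_n=T_n-M_n$ emerges as a second-order cancellation between two much larger quantities --- very fine error control is required. The polynomial remainder in \eqref{main_assumption} supplies the rate of convergence in the underlying undershoot limit theorem needed to drive this control. The Doney-type condition \eqref{doney_assumption} in the case $\alpha\in(0,1/2]$ presumably enters through a local (renewal-density) limit theorem governing the walk with barrier in the small-$D_i$ regime, where the bare regular variation of the tail is not sharp enough. Finally, uniform integrability --- and hence the first-moment assertion --- will follow from direct second-moment bounds on $(V_n-\mu_\alpha^{-1}\log n)/\sqrt{\log n}$.
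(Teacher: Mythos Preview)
Your big picture is right: the target is exactly the renewal CLT for the counting process with step $|\log\eta|$, $\eta\sim\mathrm{beta}(1-\alpha,\alpha)$, and the goal is to show that $V_n$ and $N_{\log n}$ differ by $o(\sqrt{\log n})$ in an appropriate sense. But the decomposition you set up does not lead there, and the missing bridge is not a technicality.

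Conditioning on the first \emph{accepted} jump gives a correct identity $V_n\od G_n+V'_{n-\tilde\xi_n}$ and, after iteration, $V_n=\sum_{i=0}^{M_n-1}G^{(i)}$ with $G^{(i)}$ geometric with mean $\approx c\,D_i^{-\alpha}$. This is a sum of $\asymp n^{\alpha}$ terms with strongly varying and dependent parameters; the gaps $D_i=n-L_i^{(n)}$ after successive \emph{acceptances} do not have the multiplicative structure you invoke (that structure belongs to a different sequence), so there is no direct route from this representation to a renewal process in $|\log\eta|$. The paper instead conditions on the first \emph{rejection}: running the unrestricted walk $S$ until it first overshoots $n$ and recording the undershoot $Y_n=n-S_{N_n-1}$ yields the one-step recursion $V_n\od 1_{\{Y_n>1\}}+V'_{Y_n}$, equivalently $X_n:=V_n+1_{\{n>1\}}$ satisfies $X_n\od 1+X'_{Y_n}$. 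Since $Y_n/n\Rightarrow\eta$, this matches exactly the recursion $W_t\od 1+W'_{t\eta}$ for $W_t:=\nu_{\log t}+1_{\{t>1\}}$ with $\nu$ the renewal process of $|\log\eta|$; each iteration contributes one unit and shrinks the index by an approximate factor $\eta$, which is precisely the renewal structure you were aiming at.

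The technical engine you are missing is the minimal $L_1$ (Wasserstein) distance $d_1$. The paper first proves a rate $d_1(\log(Y_n/n),\log\eta)=O(n^{-\delta})$ under \eqref{main_assumption}; it is in \emph{this} step, through infinite-mean renewal theorems for the free walk $S$ (Anderson--Athreya, and Doney/Garsia--Lamperti when $\alpha\le 1/2$), that condition \eqref{doney_assumption} enters --- not via a local limit theorem for the barrier walk as you suggest. Feeding this rate into the matched recursions and using subadditivity of $\nu$ yields $d_1(X_n,W_n)=O(1)$, which after the affine rescaling gives $d_1$-closeness of the normalized $V_n$ to the normalized $\nu_{\log n}$. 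The $(\mathrm{Conv})$ characterization of $d_1$ then delivers both the weak convergence and the convergence of first absolute moments in one stroke, so no separate second-moment argument is needed.
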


Our approach is based on the analysis of random recursive equation for $(V_n)$. It is shown that the sequence $(V_n)$ can be approximated by
a suitable renewal counting process and the error of such an approximation is estimated in terms of an appropriate probability distance. A similar method has already been used in \cite{GIMM} to derive the weak convergence result for the number of collisions in beta coalescents.

The rest of the paper is organized as follows. In Section \ref{recursions} we define the approximating renewal process and give random recursive equations for related quantities. The proofs are presented in Section \ref{proofs_sec}. An auxiliary lemma is formulated and proved in Appendix.

\section{Renewal process and recursion with random indicies}\label{recursions}
Given the sequence $(\xi_n)_{n\in\mn}$, define a zero-delayed random walk
$$
S_0=0,\;\;S_n=\xi_1+\ldots+\xi_n,\;\;n\in\mn,
$$
and the first passage process
$$
N_n:=\inf\{k\in\mno : S_k\geq n\},\;\;n\in\mn.
$$
The random variable $Y_n:=n-S_{N_n-1}$ is called {\it undershot}. It was shown\footnote{Note that in \cite{IksNeg2} the definition of $T_n$ is slightly different from our which results in different recursion for $(V_n)$.} in \cite{IksNeg2} that the sequence $(V_n)_{n\in\mn}$ satisfies
the following recursion with random index
\begin{equation}\label{rec_zi}
V_1=0,\;\;V_n\od 1_{\{{Y_n > 1}\}} + V'_{Y_n},\;\;n\geq 2,
\end{equation}
where $V'_k\od V_k$ for all $k\in\mn$ and $(V'_k)_{k\in\mn}$ and $Y_n$ are independent.

The recursion \eqref{rec_zi} can be slightly simplified by setting $X_n:=V_n+1_{\{n>1\}}$, then
\begin{equation}\label{rec_zim}
X_1=0,\;\;X_n\od 1+X'_{Y_n},\;\;n\geq 2,
\end{equation}
where likewise $X'_k\od X_k$ for all $k\in\mn$ and $(X'_k)_{k\in\mn}$ and $Y_n$ are independent.
Clearly, the asymptotic behavior of $X_n$ is the same as of $V_n$.

It is a classical observation due to Dynkin \cite{Dynkin} that under the assumption \eqref{reg} with $\alpha\in(0,1)$ we have
\begin{equation}\label{u_conv}
Y_n/n\dod \eta, \;\;n\to\infty,
\end{equation}
where $\eta$ has density \eqref{beta_density}.

Let $(\eta_k)_{k\in\mn}$ be iid copies of $\eta$. Define a zero-delayed random walk
$$
T_0=0;\;\;T_k=|\log \eta_1|+\ldots+|\log \eta_k|,\;\;k\in\mn;
$$
the corresponding renewal counting process
$$
\nu_t:=\#\{k\in\mn:T_k \leq t\}=\sum_{k=1}^{\infty}1_{\{T_k\leq t\}},\;\;t\in\mr,
$$
and set $W_t:=\nu_{\log t}+1_{\{t>1\}}$ for $t>0$. Since $\nu_t=0$ a.s. for $t\leq 0$ we have $W_t=0$ for $t\in(0,1]$, while for $t>1$ the strong Markov property implies
\begin{equation}\label{renewal_rec}
W_t\od 1+ W'_{t\eta},
\end{equation}
where $W_t\od W'_t$ for every $t>0$ and $(W'_t)_{t\geq 0}$ and $\eta$ are independent.

Comparing recursions \eqref{rec_zim} and \eqref{renewal_rec} and in view of \eqref{u_conv} we may expect that the weak asymptotic behavior of $X_n$ is the same as of $W_n$. We will show, assuming \eqref{main_assumption}, that this heuristic can be made rigorous and leads to the desired result on the asymptotic of $V_n$.

\section{Proofs}\label{proofs_sec}
We start with a refinement of \eqref{u_conv} by estimating the speed of convergence of $Y_n/n$ to $\eta$ in terms of
so-called minimal $L_1$-distance. Let us recall its definition. Let $\mathcal{D}_{1}$ be the set of probability laws on ${\mathbb R}$ with finite
first absolute moment.
The $L_1$-minimal (or Wasserstein) distance on  $\mathcal{D}_{1}$ is defined by
\begin{equation}\label{was_dis_rv}
d_{1}(X,Y)=\inf\me|\widehat{X}-\widehat{Y}|,
\end{equation}
where the infimum is taken over all couplings $(\widehat{X},\widehat{Y})$ such that
$X\od \widehat{X}$ and $Y\od\widehat{Y}$.

For ease of reference we summarize the properties of $d_{1}$ to be used in this work in the following proposition.

\begin{assertion}\label{was_dis_prop}
Let $X,Y$ be  random variables with finite first absolute moments.
The distance $d_1$ has the following properties:
 \begin{itemize}
\item[\rm(Int)] $d_1(X,Y)$ has an integral representation:
$$
d_1(X,Y):=\int_{\mr}|\mmp\{X\leq x\}-\mmp\{Y\leq x\}|{\rm d}x.
$$
\item[\rm(Rep)] $d_1(X,Y)$ has a dual representation:
$$
d_{1}(X,Y)=\sup_{f\in\mathcal{F}}|\me f(X)-\me f(Y)|.
$$
where $\mathcal{F}:=\{f\;:\; |f(x)-f(y)|\leq |x-y|\}$,
\item[\rm(Lin)] $d_{1}(cX+a,cY+a)=|c|d(X,Y)$ for  $a,c\in\mr$.
\item[\rm(Conv)] For $X, X_n\in\mathcal{D}_{1}$ convergence $d_{1}(X_n,X)\to 0$, $n\to\infty$, is equivalent to
$X_n\dod X$ and $\me|X_n|\to\me|X|$, $n\to\infty$.
\end{itemize}
\end{assertion}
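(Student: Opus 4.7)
These four properties are classical facts about the $L_1$-Wasserstein distance; the plan is to verify them in turn, leaning on well-established machinery from the theory of mass transportation.

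For (Int), the idea is to use the Hoeffding/quantile coupling: on $\mr$ the infimum in \eqref{was_dis_rv} is attained by $(\widehat X, \widehat Y) = (F_X^{-1}(U), F_Y^{-1}(U))$ with $U$ uniform on $(0,1)$. With this coupling one has $d_1(X,Y) = \int_0^1 |F_X^{-1}(u) - F_Y^{-1}(u)|\,{\rm d}u$, and applying Fubini to the pointwise identity $|a-b| = \int_\mr |1_{\{a\leq x\}} - 1_{\{b \leq x\}}|\,{\rm d}x$ converts this into $\int_\mr |F_X(x) - F_Y(x)|\,{\rm d}x$. Property (Lin) is immediate from the definition, since $(\widehat X, \widehat Y) \mapsto (c\widehat X + a, c\widehat Y + a)$ is a bijection between couplings of $(X,Y)$ and couplings of $(cX+a, cY+a)$ multiplying the cost by $|c|$.

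Property (Rep) is Kantorovich--Rubinstein duality for the cost $|x-y|$ on $\mr$. Given (Int), the shortest route is: for any $1$-Lipschitz $f$ write $f(x) = f(0) + \int_0^x f'(t)\,{\rm d}t$ (Rademacher guarantees $|f'| \leq 1$ a.e.), then an integration by parts gives $\me f(X) - \me f(Y) = \int_\mr f'(t)\bigl(\mmp\{Y\leq t\} - \mmp\{X\leq t\}\bigr)\,{\rm d}t$, whose modulus is bounded by $d_1(X,Y)$ thanks to (Int). The matching lower bound is obtained by choosing $f(x) = \int_0^x \mathrm{sign}\bigl(\mmp\{Y\leq t\} - \mmp\{X\leq t\}\bigr)\,{\rm d}t$, which is $1$-Lipschitz and attains the bound.

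The substance of the proposition lies in (Conv), which I expect to be the only step requiring real care. The forward direction is straightforward: applying (Rep) to bounded $1$-Lipschitz test functions yields $X_n \dod X$, while taking $f(x) = |x|$ gives $\me|X_n| \to \me|X|$. For the reverse, I would invoke Skorokhod's representation theorem to realize $X_n$ and $X$ on a common probability space with $X_n \to X$ almost surely; then $|X_n| \to |X|$ a.s.\ together with $\me|X_n| \to \me|X|$ forces uniform integrability of $(X_n)$ and hence of $(X_n - X)$, so the Vitali convergence theorem gives $\me|X_n - X| \to 0$, which dominates $d_1(X_n, X)$. Everything else is either bookkeeping or a direct appeal to the standard theory.
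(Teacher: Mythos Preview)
Your sketch is correct and follows the standard route (quantile coupling for (Int), Kantorovich--Rubinstein for (Rep), Skorokhod plus uniform integrability for (Conv)). There is nothing to compare against, however: the paper does not prove this proposition at all. Immediately after stating it, the authors simply write ``We refer the reader to Chapter~1 in \cite{Zolotarev} for an introduction to the theory of probability metrics, in particular for the proofs of the aforementioned properties of $d_1$.'' Your self-contained argument supplies what the paper outsources to that reference.

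One minor point worth making explicit in your write-up of the reverse direction of (Conv): the step ``$|\tilde X_n|\to|\tilde X|$ a.s.\ and $\me|\tilde X_n|\to\me|\tilde X|$ forces uniform integrability'' goes through the intermediate fact that for nonnegative variables a.s.\ convergence plus convergence of means already gives $L^1$ convergence (apply Fatou to $|\tilde X_n|\wedge|\tilde X|$ and subtract), and $L^1$ convergence then yields UI. With that spelled out, the Vitali step is clean.
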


We refer the reader to Chapter 1 in \cite{Zolotarev} for an introduction to the theory of probability metrics,
in particular for the proofs of the aforementioned properties of $d_1$.

In view of (Conv) characterization  of $d_1$ the next lemma is indeed a refinement of \eqref{u_conv}.
\begin{assertion}\label{speed_in_u_conv}
Under the assumptions of Theorem \ref{main_thm} there exists $\delta>0$ such that
\begin{equation*}
d_1\Big(\log \frac{Y_n}{n},\log \eta\Big)=d_1\Big(\log Y_n,\log (n\eta)\Big)=O(n^{-\delta}),\;\;n\to\infty.
\end{equation*}
\end{assertion}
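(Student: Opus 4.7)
The plan is to use the integral representation (Int) and the classical Dynkin-type formula for the distribution of the undershoot, reducing the claim to a quantitative strong renewal theorem for the renewal mass function of $(S_n)$. Specifically, by (Int) and the change of variable $x=\log y$, one writes
$$d_1(\log(Y_n/n),\log\eta)=\int_0^1 y^{-1}|F_n(y)-F(y)|\,dy,$$
where $F_n$ and $F$ are the distribution functions of $Y_n/n$ and $\eta$, both supported on $(0,1]$. A routine conditioning on $(N_n-1,\xi_{N_n})$ yields the exact formula $\mmp\{Y_n=j\}=u_{n-j}\,\mmp\{\xi\geq j\}$ for $1\leq j\leq n$, where $u_m:=\sum_{k\geq 0}\mmp\{S_k=m\}$; thus $F_n(j/n)$ becomes an explicit Riemann-like sum whose natural continuum limit is $F(j/n)=\frac{\sin\pi\alpha}{\pi}\int_0^{j/n}t^{-\alpha}(1-t)^{\alpha-1}dt$.

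The core of the argument will be a quantitative strong renewal theorem of the form
$$u_m=\frac{\sin\pi\alpha}{\pi c}\,m^{\alpha-1}+O(m^{\alpha-1-\delta'}),\quad m\to\infty,$$
for some $\delta'>0$; I anticipate that this is precisely what the auxiliary lemma promised in the Appendix will supply. This is the only place where the sharper tail expansion \eqref{main_assumption} is genuinely needed (mere regular variation \eqref{reg} would only yield $o(m^{\alpha-1})$), and where \eqref{doney_assumption} is required to secure the strong renewal theorem in the delicate range $\alpha\in(0,1/2]$. Multiplying by $\mmp\{\xi\geq j\}=cj^{-\alpha}+O(j^{-(\alpha+\varepsilon)})$, summing up to $j=\lfloor ny\rfloor$, and controlling the Riemann-sum error for the integrand $t^{-\alpha}(1-t)^{\alpha-1}$ (integrable despite mild endpoint singularities) yields a pointwise bound $|F_n(y)-F(y)|=O(n^{-\delta''})$ uniformly in the bulk region $y\in[n^{-a},1-n^{-b}]$ for small $a,b>0$ and some $\delta''>0$ depending on $\alpha,\varepsilon,\delta',a,b$.

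The two endpoint regions are handled by crude bounds. Near $y=0$, both $F_n(y)$ and $F(y)$ are of order $y^{1-\alpha}$ (from summing the exact formula and from the beta asymptotic, respectively), so $\int_0^{n^{-a}}y^{-1}|F_n-F|\,dy=O(n^{-a(1-\alpha)})$. Near $y=1$, both $1-F_n(y)$ and $1-F(y)$ are of order $(1-y)^\alpha$ (by a short renewal computation and direct integration), giving $\int_{1-n^{-b}}^1 y^{-1}|F_n-F|\,dy=O(n^{-b(\alpha+1)})$. Choosing $a$, $b$, and $\delta''$ so that all three contributions are of the same polynomial order produces a single exponent $\delta>0$ with $d_1=O(n^{-\delta})$. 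The main obstacle is securing the quantitative strong renewal theorem with a polynomial rate; everything else is careful but routine bookkeeping of error terms.
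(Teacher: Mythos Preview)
Your route and the paper's diverge at the very first step. The paper does \emph{not} use the explicit undershoot formula $\mmp\{Y_n=j\}=u_{n-j}\,\mmp\{\xi\geq j\}$ or any quantitative strong renewal theorem for $u_m$. Instead it exploits the one-step distributional recursion $Y_n\od n1_{\{\xi\geq n\}}+Y'_{n-\xi}1_{\{\xi<n\}}$ to obtain, via the dual representation (Rep), a renewal-type inequality for $t_n:=d_1(\log Y_n,\log(n\eta))$,
\[
t_n\;\leq\;c_n+\sum_{j=1}^{n-1}p_j\,t_{n-j},\qquad c_n=d_1\bigl((\log(1-\tilde\xi n^{-1})\tilde\eta)\,1_{\{\tilde\xi<n\}},\,\log\tilde\eta\bigr).
\]
The Appendix lemma (Lemma~\ref{1111}) is devoted entirely to showing $c_n=O(n^{-(\alpha+\delta)})$; it is a direct $d_1$ estimate for this specific pair of laws, not an expansion of $u_m$. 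The recursion is then closed by the Anderson--Athreya infinite-mean renewal theorem (together with Doney's Theorem~B / Garsia--Lamperti when $\alpha\le 1/2$), which need only the first-order information on $u_m$.

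The gap in your plan is therefore concrete: the quantitative strong renewal theorem $u_m=\tfrac{\sin\pi\alpha}{\pi c}\,m^{\alpha-1}+O(m^{\alpha-1-\delta'})$ on which your argument rests is \emph{not} what the Appendix supplies, and you give no independent proof or reference for it. Under \eqref{main_assumption} and \eqref{doney_assumption} such a polynomial rate is plausible and results of this flavour exist, but it is a substantial statement in its own right---arguably at least as hard as the proposition you are proving. The paper's recursive strategy is designed precisely to bypass it: by pushing all the work onto $c_n$ (handled by elementary if lengthy calculus in Lemma~\ref{1111}) and then iterating, only the qualitative strong renewal theorem is ever invoked. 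If you can source or prove the rate-version SRT, your direct computation would go through and be conceptually transparent; as written, the core lemma is assumed rather than established.
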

\begin{proof}
The first equality follows from (Lin) property of $d_1$. Using (Rep) we have
\begin{equation}\label{kr_rep}
d_1\Big(\log Y_n,\log (n\eta)\Big)=\sup_{f\in\mathcal{F}_1}\Big|\me f(\log Y_n)-\me f(\log (n\eta))\Big|.
\end{equation}
From the distributional identity
$$
Y_1=1,\;\;Y_n\od n1_{\{\xi\geq n\}}+Y'_{n-\xi}1_{\{\xi<n\}},\;\;n\geq 2,
$$
where $Y'_k\od Y_k$ for all $k\in\mn$ and $(Y'_k)_{k\in\mn}$ is independent from $\xi$, we infer
$$
\me f(\log Y_n)=\mmp\{\xi\geq n\}f(\log n)+\sum_{j=1}^{n-1}p_j\me f(\log Y_{n-j}),\;\;n\geq 2.
$$
Substituting this into \eqref{kr_rep} and using the triangle inequality gives
\begin{eqnarray*}
&&\hspace{-1.5cm}d_1\Big(\log Y_n,\log (n\eta)\Big)\\
&&\hspace{-1cm}\leq \sup_{f\in\mathcal{F}_1}\Big|\mmp\{\xi\geq n\}f(\log n)+\sum_{j=1}^{n-1}p_j\me f(\log (n-j)\eta)-\me f(\log (n\eta))\Big|\\
&&\hspace{5cm}+\sum_{j=1}^{n-1}p_j\sup_{f\in\mathcal{F}_1}\Big|\me f(\log Y_{n-j})-\me f(\log (n-j)\eta)\Big|\\
&&\hspace{-1cm}= \sup_{f\in\mathcal{F}_1}\Big|\mmp\{\xi\geq n\}f(\log n)+\sum_{j=1}^{n-1}p_j\me f(\log (n-j)\eta)-\me f(\log (n\eta))\Big|\\
&&\hspace{5cm}+\sum_{j=1}^{n-1}p_jd_1\Big(\log Y_{n-j},\log (n-j)\eta\Big).
\end{eqnarray*}
Let $\tilde{\xi}$ be independent of $\tilde{\eta}$ and $\tilde{\xi}\od\xi$, $\tilde{\eta}\od \eta$. The first term can be written as
\begin{eqnarray*}
&&\hspace{-0.5cm}\sup_{f\in\mathcal{F}_1}\Big|\mmp\{\xi\geq n\}f(\log n)+\sum_{j=1}^{n-1}p_j\me f(\log (n-j)\eta)-\me f(\log (n\eta))\Big|\\
&&=d_1\Big(\log (n1_{\{\tilde{\xi}\geq n\}}+(n-\tilde{\xi})\tilde{\eta} 1_{\{\tilde{\xi}<n\}}),\log(n\tilde{\eta})\Big)=d_1\Big((\log (1-\tilde{\xi} n^{-1})\tilde{\eta}) 1_{\{\tilde{\xi}<n\}},\log\tilde{\eta}\Big),
\end{eqnarray*}
where we have utilized (Lin) property of $d_1$ in the second equality.

For every $x\geq 1$,
$$
\mmp\{\xi\geq x\}=\mmp\{\xi\geq \lceil x\rceil\}=c(\lceil x\rceil)^{-\alpha}+O((\lceil x\rceil)^{-(\alpha+\varepsilon)})=cx^{-\alpha}+O(x^{-((\alpha+\varepsilon)\wedge 1)}),
$$
hence, by Lemma \ref{1111} with $\beta=1$ and $x=n$, there exist $K>0$ and $\delta\in (0,1-\alpha)$ such that
$$
d_1\Big(\log Y_n,\log (n\eta)\Big)\leq Kn^{-(\alpha+\delta)}+\sum_{j=1}^{n-1}p_jd_1\Big(\log Y_{n-j},\log (n-j)\eta\Big).
$$
Using 1-arithmetic variant of Theorem 1 in \cite{AndersonAthreya} and also Theorem B in \cite{Doney} if $\alpha\in(0,1/2]$ (see also Theorem 1 in \cite{GarsiaLamperti}), we obtain
$$
d_1\Big(\log Y_n,\log (n\eta)\Big)=O(n^{-\delta}),\;\;n\to\infty.
$$
The proof is complete.
\end{proof}

\subsection{Proof of Theorem \ref{main_thm}}\label{proof_2}
It is enough to prove Theorem \ref{main_thm} for $V_n$ replaced by $X_n$. In view of (Conv) property of $d_1$, in order to prove Theorem \ref{main_thm} we need to check
$$
d_1\Big(\frac{X_n-\mu_a^{-1}\log n}{\sqrt{\sigma_a^2\mu_{a}^{-3}\log n}}, \mathcal{N}(0,1)\Big)\to 0,\;\;n\to\infty.
$$
Using the triangle inequality yields for $n\geq 2$,
\begin{eqnarray*}
d_1\Big(\frac{X_n-\mu_a^{-1}\log n}{\sqrt{\sigma_a^2\mu_{a}^{-3}\log n}}, \mathcal{N}(0,1)\Big)&\leq& d_1\Big(\frac{X_n-\mu_a^{-1}\log n}{\sqrt{\sigma_a^2\mu_{a}^{-3}\log n}}, \frac{W_n-\mu_a^{-1}\log n}{\sqrt{\sigma_a^2\mu_{a}^{-3}\log n}}\Big)\\
&&\hspace{3cm}+d_1\Big(\frac{W_n-\mu_a^{-1}\log n}{\sqrt{\sigma_a^2\mu_{a}^{-3}\log n}}, \mathcal{N}(0,1)\Big)\\
&=&d_1\Big(\frac{X_n-\mu_a^{-1}\log n}{\sqrt{\sigma_a^2\mu_{a}^{-3}\log n}}, \frac{W_n-\mu_a^{-1}\log n}{\sqrt{\sigma_a^2\mu_{a}^{-3}\log n}}\Big)\\
&&\hspace{3cm}+d_1\Big(\frac{\nu_{\log n}+1-\mu_a^{-1}\log n}{\sqrt{\sigma_a^2\mu_{a}^{-3}\log n}}, \mathcal{N}(0,1)\Big)
\end{eqnarray*}

The second term converges to zero in view of the CLT for the renewal process with finite variance (see Chapter XI.5 in \cite{Feller}) as well as the convergence of first absolute moments (see Proposition A.1 in \cite{IksMarMei1}).
From (Lin) property of $d_{1}$ we see that it is enough to prove
\begin{equation}\label{o_est_rv1}
d_{1}(X_n,W_n)=O(1),\;\;n\to\infty.
\end{equation}

Using the recursions for $X_n$ and $W_n$ we have, in view of (Lin) property of $d_{1}$,
\begin{eqnarray*}
t_n:=d_{1}(X_n,W_n) &=& d_{1}(X'_{Y_n},W'_{n\eta})\leq d_{1}(W'_{n\eta},W'_{Y_n})\\
&+&d_{1}(W'_{Y_n},X'_{Y_n})\leq d_{1}(W'_{n\eta},W'_{Y_n}) + \me|\widehat{W}_{Y_n}-\widehat{X}_{Y_n}|\\
&=:&c_n+\sum_{k=2}^{n}\mmp\{Y_n=k\} \me|\widehat{X}_{k}-\widehat{W}_{k}|,
\end{eqnarray*}
for arbitrary pairs $\{(\widehat{X}_k,\widehat{W}_k):2\leq k\leq n\}$ independent of $Y_n$ such that $\widehat{X}_k\od X_k$, $\widehat{W}_k\od W_k$.
Passing to infimum over all such pairs in both sides of inequality leads to
\begin{equation}\label{recursion_rv1}
  t_n\leq c_n+\sum_{k=2}^{n}\mmp\{Y_n=k\}t_k.
\end{equation}
In order to estimate $c_n$ we proceed as follows. Let
$(\hat{Y}_n,\hat{\eta})$ be a coupling of $Y_n$ and $\eta$ such that
$d_{1}(\log Y_n,\log (n\eta))=\me|\log\hat{Y}_n-\log (n\hat{\eta})|$.
Let $(\hat{\nu}_t)_{t\in\mr}$ be a copy of $(\nu_t)_{t\in\mr}$ independent of $(\hat{Y}_n,\hat{\eta})$. We have
\begin{eqnarray*}
c_n=d_{1}(W'_{Y_n},W'_{n\eta(a)})&=&d_1(\hat{\nu}_{\log \hat{Y}_n}+1_{\{\hat{Y}_n>1\}},\hat{\nu}_{\log (n\hat{\eta})}+1_{\{n\hat{\eta}>1\}})\\
&\leq&\me |\hat{\nu}_{\log \hat{Y}_n}+1_{\{\hat{Y}_n>1\}}-\hat{\nu}_{\log (n\hat{\eta})}-1_{\{n\hat{\eta}>1\}}|\\
&\leq&\me |\hat{\nu}_{\log \hat{Y}_n}-\hat{\nu}_{\log (n\hat{\eta})}|+\mmp\{Y_n=1\}+\mmp\{n\eta\leq 1\}
\end{eqnarray*}
where the penultimate inequality follows from the definition of $d_1$, since $(\hat{Y}_n,\hat{\eta},(\hat{\nu}(t)))$ is a particular coupling. There exists $\rho>0$ such that the last two summands are $O(n^{-\rho})$. To bound the first term, we apply the distributional subadditivity of $(\nu_t)$:
$$
\nu_{x+y}-\nu_{x}\stackrel{d}{\leq}\nu_y,\;\;x,y\in\mr,
$$
which yields
\begin{equation}\label{c_n_est_2}
c_n\leq \me \hat{\nu}_{|\log \hat{Y}_n-\log (n\hat{\eta})|}+O(n^{-\rho}).
\end{equation}
Note that for every $x\geq 0$,
$$
\mmp\{T_1\leq x\}\leq \me\nu_x=\sum_{k=1}^{\infty}\mmp\{T_k\leq x\}\leq \sum_{k=1}^{\infty}(\mmp\{T_1\leq x\})^k=\frac{\mmp\{T_1\leq x\}}{\mmp\{T_1>x\}},
$$
hence, by the standard sandwich argument,
$$
\lim_{x\downarrow 0}\frac{\me\nu_x}{x^{\alpha}}=\frac{\sin\pi\alpha}{\pi\alpha}.
$$
On the other hand, from the elementary renewal theorem we have
$$
\lim_{x\to\infty}\frac{\me\nu_x}{x}=\frac{1}{\me T_1},
$$
therefore there exist constants $c_1,c_2>0$ such that for all $x\geq 0$,
\begin{equation}\label{ren_func_est}
\me\nu_x\leq c_1x^{\alpha}+c_2x.
\end{equation}
Using \eqref{ren_func_est} and \eqref{c_n_est_2} we obtain
\begin{eqnarray*}
c_n&\leq& c_1\me |\log \hat{Y}_n-\log (n\hat{\eta})|^{\alpha}+c_2 \me|\log \hat{Y}_n-\log (n\hat{\eta})|+O(n^{-\rho})\\
&\leq&c_1 d_1^{\alpha}(\log Y_n,\log (n\eta))+c_2 d_1(\log Y_n,\log (n\eta))+O(n^{-\rho}).
\end{eqnarray*}
By Lemma \ref{speed_in_u_conv} we conclude $c_n=O(n^{-\rho'})$ for some $\rho'>0$ as $n\to\infty$.

It remains to apply Lemma A.1 from \cite{GneIksMar_coal} with $\phi_n\equiv 1$ to \eqref{recursion_rv1} to conclude that
$$
t_n=O\Big(\sum_{k=1}^{n}\frac{k^{-\rho'}}{k}\Big)=O(1),\;\;n\to\infty.
$$
The proof of Theorem \ref{main_thm} is complete.

\section{Appendix}
The next lemma is the main ingredient in the proof of Proposition \ref{speed_in_u_conv}.
\begin{lemma}\label{1111}
Assume that $\theta$ is a random variable on $[1,+\infty)$ such that for some $c>0$, $\alpha\in(0,1)$ and $\varepsilon>0$
\begin{equation}\label{111}
1-F_{\theta}(x):=\mmp\{\theta\geq x\}=cx^{-\alpha}+O(x^{-(\alpha+\varepsilon)}),\;\;x\to\infty.
\end{equation}
Let $\eta$ be a random variable with density \eqref{beta_density} independent of $\theta$. Then for every $\beta>0$ there exists $\delta>0$ such that
\begin{equation}\label{11}
d_1\Big(\log ((1-\theta x^{-1})\eta) 1_{\{\theta<x-\beta\}},\log\eta\Big)=O(x^{-(\alpha+\delta)}),\;\;x\to\infty.
\end{equation}
\end{lemma}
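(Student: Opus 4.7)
The strategy is to use the dual representation $({\rm Rep})$ of $d_1$ from Proposition \ref{was_dis_prop}, writing
\[
d_1(Z_x,\log\eta) = \sup_{f\in\mathcal{F},\,f(0)=0}\bigl|\me f(Z_x) - \me f(\log\eta)\bigr|,
\]
with $Z_x := \log((1-\theta x^{-1})\eta)1_{\{\theta<x-\beta\}}$. The normalization $f(0)=0$ is harmless and gives $|f(y)|\le|y|$. Conditioning on $\theta$ and using independence of $\theta$ and $\eta$,
\[
\me f(Z_x) - \me f(\log\eta) = \int_1^{x-\beta} G_f(t,x)\,{\rm d}F_\theta(t) - \bar F(x-\beta)\me f(\log\eta),
\]
where $G_f(t,x) := \me[f(\log(1-t/x)+\log\eta) - f(\log\eta)]$ satisfies $|G_f(t,x)|\le|\log(1-t/x)|$ and is Lipschitz in $t$ with constant $(x-t)^{-1}$, both uniformly in $f$.

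Next I would split $\bar F(t) = ct^{-\alpha} + r(t)$ with $|r(t)| = O(t^{-(\alpha+\varepsilon)})$. Since $r = F^{\ast} - F_\theta$ has bounded variation (being a difference of two CDFs), Abel summation applied to the $r$-part of the Stieltjes integral, combined with the above bounds on $G_f$, gives a contribution of order $O(x^{-(\alpha+\varepsilon)\wedge 1}\log x)$ uniformly in $f$. Moreover $\bar F(x-\beta) = cx^{-\alpha} + O(x^{-(\alpha+(1\wedge\varepsilon))})$, so matters reduce to showing that the pure Pareto contribution $c\alpha\int_{t_0}^{x-\beta} G_f(t,x)t^{-\alpha-1}\,{\rm d}t$ cancels $cx^{-\alpha}\me f(\log\eta)$ up to an acceptable error. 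After the substitution $t=xu$ the Pareto contribution becomes $c\alpha x^{-\alpha}\int_{t_0/x}^{1-\beta/x}\Phi_f(u)u^{-\alpha-1}\,{\rm d}u$, where $\Phi_f(u) := \me[f(\log(1-u)+\log\eta) - f(\log\eta)]$, and the required cancellation reduces to the integral identity
\[
\alpha\int_0^1\Phi_f(u)u^{-\alpha-1}\,{\rm d}u = \me f(\log\eta),
\]
valid for every $1$-Lipschitz $f$ with $f(0)=0$. I would establish this first for $f(y)=e^{ity}$, where it follows from the explicit Mellin transform $\me\eta^{it} = \Gamma(1-\alpha+it)/[\Gamma(1-\alpha)\Gamma(1+it)]$ together with the functional equation $\alpha\Gamma(-\alpha) = -\Gamma(1-\alpha)$, and then extend to Lipschitz $f$ by Fourier inversion. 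The endpoint corrections from integrating over $[0,t_0/x]$ and $[1-\beta/x,1]$ in $u$ are easily bounded by $O(x^{-(1-\alpha)})$ and $O(x^{-1}\log x)$ respectively, contributing $O(x^{-1})$ and $O(x^{-(\alpha+1)}\log x)$ to the final estimate after the $c\alpha x^{-\alpha}$ factor.

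Combining all the estimates yields $d_1(Z_x,\log\eta) = O(x^{-(\alpha+\delta)})$ for any $\delta<\min(1-\alpha,\varepsilon)$, which is the claim. The main technical hurdle is the uniform verification of the above Mellin identity over the entire $1$-Lipschitz class; this is the only step where the precise shape of the beta$(1-\alpha,\alpha)$ density is used in an essential way, and it encodes the distributional fixed-point equation for $\eta$ underlying the Dynkin--Lamperti limit theorem.
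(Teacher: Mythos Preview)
Your overall plan is correct and reaches the same conclusion, but it proceeds along a genuinely different route from the paper's own proof. The paper works with the integral representation $({\rm Int})$ of $d_1$: it writes $s_\theta(x,\beta)=\int_0^1|\cdots|z^{-1}\,{\rm d}z$, integrates by parts in the Stieltjes sense to compare $\theta$ directly with a pure Pareto variable $\theta_\alpha$, and then splits into three explicit pieces $I_1+I_2+I_3$. The error pieces $I_2,I_3$ (carrying the $O(y^{-(\alpha+\varepsilon)})$ correction) are bounded by direct computation, while the ``exact'' Pareto piece $I_1$ is shown to be $O(x^{-(\alpha+1)}\log x)$ through a further, somewhat lengthy chain of substitutions that in the end boils down to the beta integral $\frac{\sin\pi\alpha}{\pi}\int_z^1(w-z)^{-\alpha}(1-w)^{\alpha-1}\,{\rm d}w=1$. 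Your approach instead uses the dual representation $({\rm Rep})$, conditions on $\theta$, and packages the Pareto cancellation into the single clean identity $\alpha\int_0^1\Phi_f(u)u^{-\alpha-1}\,{\rm d}u=\me f(\log\eta)$ for $f(0)=0$. This is more conceptual---it makes transparent that the result encodes the Dynkin--Lamperti fixed point for $\eta$---and the remainder and endpoint estimates you sketch are all correct and uniform in $f\in\mathcal F$.

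The one soft spot is the step ``extend to Lipschitz $f$ by Fourier inversion.'' A $1$-Lipschitz function on $\mr$ need not have a Fourier transform in any classical sense, so this extension is not routine. The identity is, however, easy to establish directly: for $f=1_{(-\infty,\log z]}$ it reduces (after the same integration by parts you would use for the Mellin case) exactly to the beta integral $\frac{\sin\pi\alpha}{\pi}\int_z^1(w-z)^{-\alpha}(1-w)^{\alpha-1}\,{\rm d}w=1$, and any $1$-Lipschitz $f$ with $f(0)=0$ on $(-\infty,0]$ can be written as $f(y)=-\int_{-\infty}^0 f'(t)1_{\{y\le t\}}\,{\rm d}t$, after which Fubini (justified by $|\Phi_f(u)|\le|\log(1-u)|$) gives the general case. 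With this fix your argument is complete, and arguably cleaner than the paper's explicit CDF calculations; the paper's approach, on the other hand, avoids the need to isolate and prove a separate functional identity.
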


\begin{proof}
Denote the left-hand side of \eqref{11} by $s_{\theta}(x,\beta)$. In view of relations
$$
s_{\theta}(x,\beta)=s_{c^{-1/\alpha}\theta}(c^{-1/\alpha}x,c^{-1/\alpha}\beta),\;\;x\geq 1,
$$
and
$$
\mmp\{c^{-1/\alpha}\theta\geq x\}=x^{-\alpha}+O(x^{-(\alpha+\varepsilon)}),\;\;x\to\infty,
$$
it is enough to prove the result for $c=1$. Fix $\beta$ for the rest of the proof. Using representation (Int) from Proposition \ref{was_dis_prop} we have
\begin{eqnarray*}
s_{\theta}(x,\beta)&=&\int_{-\infty}^{0}|\mmp\{\log (1_{\{\theta\geq x-\beta\}}+(1-\theta x^{-1})\eta 1_{\{\theta<x-\beta\}})\leq z\}-\mmp\{\log \eta\leq z\}|{\rm d}z\\
&=&\int_0^1 |{\mmp\{1_{\{\theta\geq x-\beta\}}+(1-\theta x^{-1})\eta 1_{\{\theta<x-\beta\}}\leq z\}}-\mmp\{\eta\leq z\}|z^{-1}{\rm d}z.
\end{eqnarray*}
Integrating by parts the first probability in the integrand, we obtain for $z\in [0,1)$ and $x>1+\beta$,
\begin{eqnarray*}
&&\hspace{-2.5cm}\mmp\{1_{\{\theta\geq x-\beta\}}+(1-\theta x^{-1})\eta 1_{\{\theta<x-\beta\}}\leq z\}\\
&&=-\int_{[1,x-\beta)}\mmp\{(1-y x^{-1})\eta\leq z\}{\rm d}(1-F_{\theta}(y))\\
&&=-\mmp\{\eta\leq \beta^{-1}xz\}\Big(1-F_{\theta}((x-\beta)-)\Big)+\mmp\{\eta\leq zx(x-1)^{-1}\}\\
&&+\int_{[1,x-\beta)}(1-F_{\theta}(y)){\rm d}_y\mmp\{(1-y x^{-1})\eta\leq z\}.
\end{eqnarray*}
Let $\theta_{\alpha}$ be a random variable independent of $\eta$ and with distribution
$$
1-F_{\theta_{\alpha}}(x):=\mmp\{\theta_{\alpha}\geq x\}=x^{-\alpha},\;\;x\geq 1.
$$
By the same reasoning as above,
\begin{eqnarray*}
&&\hspace{-2.6cm}\mmp\{1_{\{\theta_{\alpha}\geq x-\beta\}}+(1-\theta_{\alpha} x^{-1})\eta 1_{\{\theta_{\alpha}<x-\beta\}}\leq z\}\\
&&=-\int_{[1,x-\beta)}\mmp\{(1-y x^{-1})\eta\leq z\}{\rm d}(1-F_{\theta_{\alpha}}(y))\\
&&=-\mmp\{\eta\leq \beta^{-1}xz\}\Big(1-F_{\theta_{\alpha}}((x-\beta))\Big)+\mmp\{\eta\leq zx(x-1)^{-1}\}\\
&&+\int_{[1,x-\beta)}(1-F_{\theta_{\alpha}}(y)){\rm d}_y\mmp\{(1-y x^{-1})\eta\leq z\}.
\end{eqnarray*}
Subtracting the corresponding equations and using \eqref{111} we have for $z\in[0,1)$ and $x>1+\beta$,
\begin{eqnarray*}
\Big|\mmp\{1_{\{\theta\geq x-\beta\}}+(1-\theta x^{-1})\eta 1_{\{\theta<x-\beta\}}\leq z\}-\mmp\{1_{\{\theta_{\alpha}\geq x-\beta\}}+(1-\theta_{\alpha} x^{-1})\eta 1_{\{\theta_{\alpha}<x-\beta\}}\leq z\}\Big|\\
\leq K\Big(\mmp\{\eta\leq \beta^{-1}xz\}(x-\beta)^{-(\alpha+\varepsilon)}+\int_{[1,x-\beta)}y^{-(\alpha+\varepsilon)}{\rm d}_y\mmp\{(1-y x^{-1})\eta\leq z\}\Big),
\end{eqnarray*}
for some $K>0$ which does not depend on $x$ and $z$. Therefore,
\begin{eqnarray*}
&&\hspace{-1cm}s_{\theta}(x,\beta)\\
&&\leq\int_0^1 |{\mmp\{1_{\{\theta_{\alpha}\geq x-\beta\}}+(1-\theta_{\alpha} x^{-1})\eta 1_{\{\theta_{\alpha}<x-\beta\}}\leq z\}}-\mmp\{\eta\leq z\}|z^{-1}{\rm d}z\\
&&+K\int_0^1 z^{-1}\mmp\{\eta\leq \beta^{-1}xz\}(x-\beta)^{-(\alpha+\varepsilon)}{\rm d}z\\
&&+K\int_0^1 z^{-1}\int_{[1,x-\beta)}y^{-(\alpha+\varepsilon)}{\rm d}_y\mmp\{(1-y x^{-1})\eta\leq z\} {\rm d}z=:I_1(x)+I_2(x)+I_3(x).
\end{eqnarray*}
Firstly we calculate $I_2(x)$ explicitly as follows:
\begin{eqnarray*}
I_2(x)&=&K(x-\beta)^{-(\alpha+\varepsilon)}\int_0^1\mmp\{\eta\leq \beta^{-1}xz\}z^{-1}{\rm d}z\\
&=&K(x-\beta)^{-(\alpha+\varepsilon)}\int_0^{\beta x^{-1}}\mmp\{\eta\leq \beta^{-1}xz\}z^{-1}{\rm d}z+K(x-\beta)^{-(\alpha+\varepsilon)}(\log x-\log \beta)\\
&=&K(x-\beta)^{-(\alpha+\varepsilon)}\int_0^{1}\mmp\{\eta\leq z\}z^{-1}{\rm d}z+K(x-\beta)^{-(\alpha+\varepsilon)}(\log x-\log \beta)\\
&=&K(x-\beta)^{-(\alpha+\varepsilon)}(\me|\log\eta|+\log x-\log \beta))=O(x^{-(\alpha+\varepsilon)\log x}).
\end{eqnarray*}
Pick $\varepsilon'\in(0,\varepsilon]$ such that $\alpha+\varepsilon'<1$. The third summand $I_3(x)$ is estimated using the Fubini's theorem:
\begin{eqnarray*}
I_3(x)&\leq &K\int_0^1 z^{-1}\int_{[1,x-\beta)}y^{-(\alpha+\varepsilon')}{\rm d}_y\mmp\{(1-y x^{-1})\eta\leq z\} {\rm d}z\\
&=&K\int_0^1 z^{-1}\int_{[1,x-\beta)}y^{-(\alpha+\varepsilon')}\mmp\{(1-\eta^{-1}z)x\in {\rm d}y\}{\rm d}z\\
&=&K\int_0^1 z^{-1}\me \Big((1-\eta^{-1}z)x\Big)^{-(\alpha+\varepsilon')}1_{\{1\leq (1-\eta^{-1}z)x\leq x-\beta\}}{\rm d}z\\
&=&Kx^{-(\alpha+\varepsilon')} \me \int_0^1 z^{-1}(1-\eta^{-1}z)^{-(\alpha+\varepsilon')}1_{\{1\leq (1-\eta^{-1}z)x\leq x-\beta\}}{\rm d}z\\
&=&Kx^{-(\alpha+\varepsilon')} \me \int_{\beta\eta x^{-1}}^{\eta(1-x^{-1})} z^{-1}(1-\eta^{-1}z)^{-(\alpha+\varepsilon')}{\rm d}z\\
&\overset{z=\eta u}{=}&K x^{-(\alpha+\varepsilon')} \int_{\beta x^{-1}}^{1-x^{-1}} u^{-1}(1-u)^{-(\alpha+\varepsilon')}{\rm d}u=O(x^{-(\alpha+\varepsilon')}\log x).
\end{eqnarray*}

It remains to bound the first integral. To this end, note that for every $z\in[0,1)$ and $x\geq 1+\beta$,
$$\{1_{\{\theta_{\alpha}\geq x-\beta\}}+(1-\theta_{\alpha} x^{-1})\eta 1_{\{\theta_{\alpha}<x-\beta\}}\leq z\}=\{(1-\eta^{-1}z)x\leq \theta_{\alpha} < x-\beta\},$$
and therefore
\begin{eqnarray*}
&&\hspace{-1cm}\mmp\{1_{\{\theta_{\alpha}\geq x-\beta\}}+(1-\theta_{\alpha} x^{-1})\eta 1_{\{\theta_{\alpha}<x-\beta\}}\leq z\}\\
&=&\mmp\{(1-\eta^{-1}z)x\leq \theta_{\alpha} < x-\beta\}\\
&=&\mmp\{((1-\eta^{-1}z)x)\vee 1\leq \theta_{\alpha} < x-\beta\}\\
&=&\mmp\{\eta\leq \beta^{-1}xz,((1-\eta^{-1}z)x)\vee 1\leq \theta_{\alpha} < x-\beta\}\\
&=&\mmp\{\eta\leq \beta^{-1}xz,((1-\eta^{-1}z)x)\vee 1\leq \theta_{\alpha} < x\}-\mmp\{\eta\leq \beta^{-1}xz\}((x-\beta)^{-\alpha}-x^{-\alpha}).
\end{eqnarray*}
Putting this into $I_1(x)$ yields
\begin{eqnarray*}
I_1(x)&\leq&\int_0^1 |\mmp\{\eta\leq \beta^{-1}xz,((1-\eta^{-1}z)x)\vee 1\leq \theta_{\alpha} < x\}-\mmp\{\eta\leq z\}|z^{-1}{\rm d}z\\
&+&((x-\beta)^{-\alpha}-x^{-\alpha})\int_0^1 z^{-1}\mmp\{\eta\leq \beta^{-1}xz\}{\rm d}z.
\end{eqnarray*}
The second term is $O(x^{-\alpha-1}\log x)$ by the same argument as was used in the estimation of $I_2(x)$. Using simple algebra we obtain that the first term is equal to
\begin{eqnarray*}
&&\hspace{-1cm}\int_0^1\Big|\mmp\{z<\eta\leq(x-1)^{-1}xz\}\\
&+&x^{-\alpha}\Big(\int_{((x-1)^{-1}xz)\wedge 1}^{(\beta^{-1}xz)\wedge 1}((1-y^{-1}z)^{-\alpha})\mmp\{\eta\in{\rm d}y\}-\mmp\{\eta\leq \beta^{-1}xz\}\Big)\Big|z^{-1}{\rm d}z=:J(x).
\end{eqnarray*}
By the triangle inequality,
\begin{eqnarray}
&&\hspace{-1cm}J(x)\leq \int_0^1z^{-1}\mmp\{z<\eta\leq(x-1)^{-1}xz\}{\rm d}z\nonumber\\
&+&x^{-\alpha}\int_0^1\Big|\int_{((x-1)^{-1}xz)\wedge 1}^{(\beta^{-1}xz)\wedge 1}(1-y^{-1}z)^{-\alpha}\mmp\{\eta\in{\rm d}y\}-\mmp\{\eta\leq \beta^{-1}xz\}\Big|z^{-1}{\rm d}z.\label{4444}
\end{eqnarray}
The first summand, again by the Fubini's theorem, is calculated easily:
\begin{eqnarray*}
\int_0^1z^{-1}\mmp\{z<\eta\leq(x-1)^{-1}xz\}{\rm d}z&=&\me \int_0^1z^{-1}1_{\{z<\eta\leq(x-1)^{-1}xz\}}{\rm d}z\\
&=&\me \int_0^1z^{-1}1_{\{x^{-1}(x-1)\eta\leq z < \eta\}}{\rm d}z\\
&=&\me \int_{x^{-1}(x-1)\eta}^{\eta} z^{-1}{\rm d}z=|\log(1-x^{-1})|=O(x^{-1}).
\end{eqnarray*}
The inner integral in the second summand in rhs of \eqref{4444} is equal
\begin{eqnarray*}
\frac{\sin \pi\alpha}{\pi}\int_{((x-1)^{-1}xz)\wedge 1}^{(\beta^{-1}xz)\wedge 1}(y-z)^{-\alpha}(1-y)^{\alpha-1}{\rm d}y,
\end{eqnarray*}
and upon substitution $u:=(y-z)(1-z)^{-1}$ becomes
$$
\frac{\sin \pi\alpha}{\pi}\int_{\frac{z}{(1-z)(x-1)}\wedge 1}^{\frac{(\beta^{-1}x-1)z}{1-z}\wedge 1}u^{-\alpha}(1-u)^{\alpha-1}{\rm d}u=\mmp\Big\{\frac{z}{(1-z)(x-1)}\wedge 1\leq\eta\leq \frac{(\beta^{-1}x-1)z}{1-z}\wedge 1\Big\}.
$$
Since for $z\in[0,1)$ and $x>1+\beta$,
$$
0\leq \frac{z}{(1-z)(x-1)}\wedge 1\leq \frac{(\beta^{-1}x-1)z}{1-z}\wedge 1 \leq (\beta^{-1}xz)\wedge 1,
$$
the integral in the second summand in \eqref{4444} is
$$
\int_0^1 z^{-1} \mmp\Big\{\eta\leq \frac{z}{(1-z)(x-1)}\wedge 1\Big\}{\rm d}z+\int_0^1z^{-1}\mmp\Big\{\frac{(\beta^{-1}x-1)z}{1-z}\wedge 1\leq \eta\leq {(\beta^{-1}xz)\wedge 1}\Big\}{\rm d}z.
$$
We will check that the second summand above is $O(x^{-1})$ as follows:
\begin{eqnarray*}
&&\hspace{-3cm}\int_0^1z^{-1}\mmp\Big\{\frac{(\beta^{-1}x-1)z}{1-z}\wedge 1\leq \eta\leq {(\beta^{-1}xz)\wedge 1}\Big\}{\rm d}z\\
&&=\me \int_0^1 z^{-1} 1_{\{\eta\beta x^{-1}\leq z\leq \eta(\beta^{-1}x-1+\eta)^{-1}\}}{\rm d}z\\
&&=\me\Big(\log (\beta^{-1}x)-\log (\beta^{-1}x-1+\eta)\Big)=O(x^{-1}).
\end{eqnarray*}
The first term can be treated analogously, hence $J(x)=O(x^{-1})$. Combining all the estimates we get $s_{\theta}(x,\beta)=O(x^{-\alpha+\delta})$ for sufficiently small $\delta>0$. The proof is complete.
\end{proof}

\end{document}